\documentclass[11pt]{amsart}
\usepackage{amssymb}
\usepackage{bbm}
\usepackage{amsfonts}
\usepackage{latexsym}
\usepackage[all]{xy}
\usepackage{amscd}
\usepackage{comment}
\usepackage{fullpage}

\numberwithin{equation}{section}
\newtheorem{theorem}[equation]{Theorem}
\newtheorem{lemma}[equation]{Lemma}

\theoremstyle{definition}

\newtheorem{example}[equation]{Example}

\newtheorem{remark}[equation]{Remark}

\newcommand{\bu}{\DOT}
\newcommand{\cH}{{\mathcal{H}}}
\newcommand{\la}{\langle}
\newcommand{\ra}{\rangle}
\newcommand{\ot}{\otimes}
\newcommand{\Wedge}{\textstyle\bigwedge}
\newcommand{\lexp}[2]{{\vphantom{#2}}^{#1}{#2}}
\renewcommand{\k}{\Bbbk}
\newcommand{\LL}{{\mathcal{L}}}

\newcommand{\mH}{\mathcal{H}}

\newcommand{\ds}{\displaystyle}
\newcommand{\Z}{{\mathbb{Z}}}
\newcommand{\DOT}{\setlength{\unitlength}{1pt}\begin{picture}(2.5,2)
                      (1,1)\put(2,3.5){\circle*{2}}\end{picture}}

\DeclareMathOperator{\ddet}{det}
\DeclareMathOperator{\gr}{gr} 

\DeclareMathOperator{\Hom}{Hom}
\DeclareMathOperator{\op}{op} 
\DeclareMathOperator{\HH}{HH}
\DeclareMathOperator{\Ext}{Ext}
\DeclareMathOperator{\sgn}{sgn}
\DeclareMathOperator{\Span}{Span}

%%%%%%%%%%%%%%%%%%%%%%%%%%%%%%%%%%%%%%%%%%%%%%%%%%%%%%%%%%%%%%%%%%%%%%%%%%%
\begin{document}

\title[PBW deformations of quantum symmetric algebras]
{PBW deformations of quantum symmetric algebras\\ and their group extensions}
\subjclass[2010]{16E40, 16S35, 16S80, 17B35, 17B75}
\keywords{Quantum Drinfeld orbifold algebra, Hochschild cohomology, skew group algebra, color Lie algebra, quantum symmetric algebra, Gerstenhaber bracket.}
\author{Piyush Shroff}
\email{piyushilashroff@gmail.com}
\address{Department of Mathematics, Texas State University,
San Marcos, Texas 78666, USA}
\author{Sarah Witherspoon}
\email{sjw@math.tamu.edu}
\address{Department of Mathematics, Texas A\&M University,
College Station, Texas 77843, USA}

\date{March 3, 2014}

\thanks{The second author was supported by NSF grants DMS-1101399
and DMS-1401016.}

\begin{abstract}
We examine PBW deformations of finite group extensions of quantum symmetric algebras, 
in particular the quantum Drinfeld orbifold algebras defined by the first author. We give a homological interpretation, in terms of Gerstenhaber brackets,  
of the necessary and sufficient conditions on parameter functions to define a quantum Drinfeld orbifold algebra, thus clarifying the conditions. In case the acting group is trivial, we determine conditions under which such a PBW deformation is a generalized enveloping algebra of a color Lie algebra;  our PBW deformations include these algebras as a special case.
\end{abstract}

\maketitle

%%%%%%%%%%%%%%%%%%%%%%%%%%%%%%%%%%%%%%%%%%%%%%%%%%%%%%%%%%%%%%%%%%%%%%%%%%
\begin{section}{Introduction}

Poincar\'e-Birkhoff-Witt 
(PBW) deformations of quantum symmetric algebras were studied by Berger \cite{Be}
and include important classes
of examples such as the generalized enveloping algebras of color
Lie algebras. PBW deformations of group extensions of  (quantum)
symmetric algebras include many other algebras such
as rational Cherednik algebras and their generalizations studied by a number of mathematicians
(see, e.g., \cite{C,D,EG,L,RS}). 
The first author \cite{S} gave necessary and sufficient conditions on parameter
functions 
to define such  PBW deformations in this general context.
In this paper we clarify these conditions by connecting them to homological
information contained in the Gerstenhaber algebra structure of Hochschild
cohomology. We show explicitly how 
color Lie algebras are related to these PBW deformations. 

We begin with the {\bf quantum symmetric algebra} 
(or {\bf skew polynomial ring}),
\[
 S_{\bf q}(V) := \k\la v_1,\ldots,v_n \mid v_iv_j = q_{ij}v_jv_i  \mbox{ for all }
      1\leq i,j\leq n \ra ,
\]
where $\k$ is a field of characteristic 0, $V$ is a finite dimensional vector space over $\k$ with basis $v_1,v_2,\ldots,v_n$ and ${\bf q}:= (q_{ij})_{1 \leq i,j \leq n}$ is a tuple of
nonzero scalars for which $q_{ii}=1$ and $q_{ji}=q_{ij}^{-1}$ for all $i,j$. Let $G$ be a finite group acting linearly on $V$ in such a way that there is an induced action on $S_{\bf q}(V)$ by algebra automorphisms. 
For example, if $G$ acts diagonally on the chosen basis of $V$, this will be the case.
There are other possible actions as well; see, for example, \cite{BB,LS} for
actions leading to interesting deformations. 
We denote the action of $G$ by left superscript, that is, 
${}^g v$ is the element of $V$ that results from the action of $g\in G$ on $v\in V$. 
We may form the corresponding skew group algebra: 
In general for any algebra $S$ with action of $G$ by automorphisms, 
the {\bf skew group algebra} $S\rtimes G$ is $S\ot_{\k} \k G$ as a left $S$-module, 
and has the following multiplicative structure. 
Write $S\rtimes G=\oplus_{g\in G} S_g$, where $S_g=S\ot_{\k} \k g $, and 
for each $s\in S$ and $g\in G$, denote by $s\# g$ the element $s\ot g$ 
in this $g$-component $S_g$.
Multiplication on $S\rtimes G$ is determined by
$$(r\# g)(s\# h):=r(\lexp{g}s)\# gh$$
for all $r, s\in S$ and $g, h\in G$.
Then $S\rtimes G$ is a graded algebra, where elements of $V$ have degree 1
and elements of $G$ have degree 0. 

 Let $\kappa: V\times V\rightarrow (\k \oplus V) \ot _{\k} \k G$ be a bilinear map for which 
$\kappa(v_i,v_j) = - q_{ij}\kappa(v_j,v_i)$ for all $1\leq i,j\leq n$. 
Let $T(V)$ denote the tensor algebra on $V$ over $\k$, 
in which we suppress tensor symbols denoting multiplication. 
Identify the target space of $\kappa$ with the subspace of $T(V)\rtimes G$
consisting of all elements of degree less than or equal to 1. 
Define  
\begin{equation}\label{Hqk}
  \mH_{\mathbf{q}, \kappa} := (T(V)\rtimes G)/(  v_iv_j - q_{ij} v_jv_i - \kappa(v_i,v_j) \mid
     1\leq i,j\leq n),
\end{equation}
a quotient of the skew group algebra 
$T(V)\rtimes G$ by the ideal generated by all elements of the form 
$v_iv_j - q_{ij} v_jv_i -\kappa(v_i,v_j)$. 
Note that $\mH_{\mathbf{q}, \kappa}$ is a filtered algebra. 
We call $\mH_{\mathbf{q}, \kappa}$ a {\bf quantum Drinfeld orbifold algebra} if it 
is a {\bf PBW deformation} of $S_{\bf q}(V)\rtimes G$, 
that is, if its associated graded algebra is isomorphic to $S_{\bf q}(V)\rtimes G $.
Equivalently, 
the set $\{v_1^{m_1}v_2^{m_2}\cdots v_n^{m_n} \# g \mid m_i \geq 0, g \in G\}$ is a $\k$-basis for $\mH_{\mathbf{q}, \kappa}$.

Quantum Drinfeld orbifold algebras include as special cases many algebras of interest,
from rational Cherednik algebras and generalizations
(see \cite{D,EG,G,L,RS}), to
generalized enveloping algebras of color Lie algebras and quantum Lie algebras
in case $G$ is the trivial group (see \cite{K,P,PV,Wo}). 
Our analysis in this paper of the necessary and sufficient conditions on the parameter
function $\kappa$ to define a quantum Drinfeld orbifold algebra applies to all of
these algebras as special cases. 

\quad

\noindent {\bf Organization.} This paper is organized as follows.

In Section 2, we first recall from 
\cite{S} the necessary and sufficient conditions (called ``PBW conditions'') 
for $\mH_{\mathbf{q}, \kappa}$ to be a quantum Drinfeld orbifold algebra,
and show that these are all the PBW deformations of $S_{\bf q}(V)\rtimes G$
in which the  action of $G$ on $V$ is preserved.
In Section~3, we give a precise relationship between color Lie algebras and quantum 
Drinfeld orbifold algebras. 
In Section~4, we  interpret the 
PBW conditions in terms of Gerstenhaber brackets on Hochschild cohomology. 

Throughout this paper, $\k$ denotes a field of characteristic 0,
and tensor products and exterior powers are taken over $\k$. 
Some results are valid more generally in other characteristics, however some
of the homological techniques of Section~\ref{sec:hom} require the characteristic
of $\k$ to be coprime to the order of $G$, so we stick with
characteristic 0 throughout for efficiency of presentation. 

\end{section}
%%%%%%%%%%%%%%%%%%%%%%%%%%%%%%%%%%%%%%%%%%%%%%%%%%%%%%%%%%%%%%%%
\begin{section}{Necessary and sufficient conditions}
We decompose $\kappa$ into its constant and linear parts:
 $\ \kappa = \kappa^C + \kappa^L$ where $\kappa^C: V\times V\rightarrow \k G$ and $\kappa^L: V\times V\rightarrow V\otimes \k G$. For each $g\in G$, let $\kappa_g: V\times V\rightarrow \k \oplus V$ be the function determined by the equation 
\[
\kappa(v,w) = \sum_{g \in G} \kappa_g(v,w) \# g ,
\]
where $\kappa_g$ also decomposes into its constant and linear parts, i.e., $\kappa_g = \kappa_g^C + \kappa_g^L$ where $\kappa_g^C: V\times V\rightarrow \k$ and $\kappa_g^L: V\times V\rightarrow V$. We recall the following theorem from \cite{S} which gives  necessary and sufficient conditions for $\mH_{\mathbf{q}, \kappa}$ to be a quantum Drinfeld orbifold algebra, that is to be a PBW deformation of $S_{\bf q}(V)\rtimes G$. 

We will need some notation to state the conditions. For each $g\in G$ and
basis vector $v_j\in V$, write $g_i^j\in \k$ for the scalars given by
$$
    {}^g v_j = \sum_{i=1}^n g^j_i v_i.
$$
The {\bf quantum $(i,j,k,l)$-minor determinant} of $g$ is
$$
   \ddet_{ijkl}(g) := g^j_l g^i_k - q_{ji} g^i_lg^j_k.
$$
The following theorem is a simultaneous generalization of main results in
\cite{LS,SW}. 

\quad

\begin{theorem}\cite[Theorem 2.2]{S}
The algebra $\mH_{\mathbf{q}, \kappa}$, defined in (\ref{Hqk}), 
is a quantum Drinfeld orbifold algebra if and only if
the following conditions hold:
\begin{enumerate}
\item For all $g,h \in G$ and $1 \leq i < j \leq n$,
\[
\kappa_g^C(v_j, v_i) = \sum_{k < l} \ddet_{ijkl}(h) \kappa_{hgh^{-1}}^C(v_l,v_k)
\quad \text{ and } \quad 
\lexp{h}{\left(\kappa_g^L(v_j, v_i)\right)} = \sum_{k < l} \ddet_{ijkl}(h) \kappa_{hgh^{-1}}^L(v_l,v_k).
\]
\text{For all distinct }$i,j,k$ \text{and for all} $g\in G$,\\
\item $q_{ji}q_{ki}v_i\kappa_g^L(v_k,v_j) - \kappa_g^L(v_k,v_j)\lexp{g}v_i - q_{kj}v_j\kappa_g^L(v_k,v_i) \\ \hspace*{1cm} + q_{ji}\kappa_g^L(v_k,v_i)\lexp{g}v_j + v_k\kappa_g^L(v_j,v_i) - q_{ki}q_{kj}\kappa_g^L(v_j,v_i) \lexp{g}v_k =0$ ,\\\\
\item $\ds\sum_{h\in G}\left(q_{ij}q_{ik}\kappa_{gh^{-1}}^L(\kappa_h^L(v_j,v_k),\lexp{h}v_i) - \kappa_{gh^{-1}}^L(v_i,\kappa_h^L(v_j,v_k)) +
q_{ik}q_{jk}\kappa_{gh^{-1}}^L(\kappa_h^L(v_k,v_i),\lexp{h}v_j)\right.\\ \hspace*{1cm} \left. - q_{ij}q_{ik}\kappa_{gh^{-1}}^L(v_j,\kappa_h^L(v_k,v_i))+ \kappa_{gh^{-1}}^L(\kappa_h^L (v_i,v_j), \lexp{h}v_k) - q_{ik}q_{jk}\kappa_{gh^{-1}}^L(v_k,\kappa_h^L (v_i,v_j))\right) \\\\
=2\left(\kappa_g^C(v_j,v_k)(v_i-q_{ij}q_{ik}\lexp{g}v_i)+\kappa_g^C(v_k,v_i)(q_{ij}q_{ik}v_j-q_{ik}q_{jk}\lexp{g}v_j)+\kappa_g^C(v_i,v_j)(q_{ik}q_{jk}v_k-\lexp{g}v_k)\right)$,\\\\
\item $\ds\sum_{h\in G}\left(q_{ij}q_{ik}\kappa_{gh^{-1}}^C(\kappa_h^L(v_j,v_k),\lexp{h}v_i) - \kappa_{gh^{-1}}^C(v_i,\kappa_h^L(v_j,v_k)) +
q_{ik}q_{jk}\kappa_{gh^{-1}}^C(\kappa_h^L(v_k,v_i),\lexp{h}v_j)\right.\\ \hspace*{0.5cm} \left. - q_{ij}q_{ik}\kappa_{gh^{-1}}^C(v_j,\kappa_h^L(v_k,v_i))+ \kappa_{gh^{-1}}^C(\kappa_h^L (v_i,v_j), \lexp{h}v_k) - q_{ik}q_{jk}\kappa_{gh^{-1}}^C(v_k,\kappa_h^L (v_i,v_j))\right)=0$ .\\\\
\end{enumerate}
\end{theorem}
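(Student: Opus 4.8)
The plan is to verify the PBW property by applying Bergman's Diamond Lemma to the reduction system on $T(V)\rtimes G$ defined by the relations $v_iv_j \mapsto q_{ij}v_jv_i + \kappa(v_i,v_j)$ for $i>j$, together with the straightening relations $g v_j \mapsto \sum_i g_i^j v_i \cdot g$ coming from the skew group structure. One first fixes a monomial order: order the basis of $V$ by $v_1 < \dots < v_n$, compare words of $S_{\mathbf q}(V)$ degree-lexicographically, and then adjoin group elements on the right, declaring $v$-then-$g$ words reduced. The set $\{v_1^{m_1}\cdots v_n^{m_n}\# g\}$ is exactly the set of irreducible words, so the PBW condition is equivalent to confluence, i.e., the resolvability of all overlap (and inclusion) ambiguities.

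The key step is to enumerate the ambiguities and show each resolves precisely when conditions (1)--(4) hold. There are essentially two families. First, the \emph{group-vector} overlaps $h\cdot v_j\cdot v_i$ with $i>j$: reducing $v_jv_i$ first, then moving $h$ past the result, versus moving $h$ past $v_j v_i$ first and then straightening, produces a discrepancy that must vanish. Comparing the constant and linear components of this discrepancy — and using the definition of $\ddet_{ijkl}(h)$, which is exactly the coefficient of $v_lv_k$ ($k<l$) when one rewrites $\lexp{h}v_j\,\lexp{h}v_i$ in the PBW basis of $S_{\mathbf q}(V)$ — yields condition (1) (equivariance of $\kappa^C$ and $\kappa^L$). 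Second, the \emph{triple-vector} overlap $v_i v_j v_k$ for a fixed ordering $i>j>k$ (equivalently the classical Jacobi-type overlap $v_i v_j v_k$): reducing $(v_iv_j)v_k$ versus $v_i(v_jv_k)$ and collecting terms by filtration degree gives, in top degree (degree $2$ in $V$), condition (2); in degree $1$, condition (3); and in degree $0$, condition (4). Here one must be careful to extend $\kappa$ and the straightening maps to act on all of $(\k\oplus V)\ot \k G$, and to track the quantum scalars $q_{ij}$ picked up when commuting the various factors — this is the bookkeeping that produces the precise monomials $q_{ij}q_{ik}$, $q_{ik}q_{jk}$, etc., appearing in the statement, and the sum over $h\in G$ arises because $\kappa_h^L(v_j,v_k)$ must be fed back through $\kappa_{gh^{-1}}$.

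For the converse direction, one observes that if $\mathcal H_{\mathbf q,\kappa}$ is a PBW deformation then the irreducible words are linearly independent, so each ambiguity \emph{must} resolve, forcing (1)--(4); no separate argument is needed beyond the Diamond Lemma's biconditional. One should also remark that it suffices to check ambiguities with distinct indices $i,j,k$: overlaps with a repeated index either reduce to the case $n=2$ (where conditions (2)--(4) are vacuous, matching the ``for all distinct $i,j,k$'' hypothesis) or follow formally from the defining skew-symmetry $\kappa(v_i,v_j) = -q_{ij}\kappa(v_j,v_i)$ and condition (1).

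The main obstacle I anticipate is the triple-overlap computation: expanding $(v_iv_j)v_k$ and $v_i(v_jv_k)$ each requires several rounds of reduction, at each stage the non-homogeneous term $\kappa$ must itself be reduced (its linear part $\kappa^L(v_j,v_k)$ is a combination of the $v_i$, which may then need straightening past a group element), and the resulting expression must be sorted into the irreducible basis and split by degree. Keeping the quantum scalars and the group-conjugation $hgh^{-1}$ consistent throughout — so that the degree-$2$, degree-$1$, and degree-$0$ pieces line up term-by-term with (2), (3), (4) respectively — is the delicate part; everything else is a direct application of the Diamond Lemma. This is, in essence, the computation carried out in \cite{S}, so in the present paper one can cite \cite[Theorem 2.2]{S} and restrict the proof to explaining the setup and the role of each condition.
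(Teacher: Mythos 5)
The paper does not actually prove this theorem; it is quoted from \cite{S}, where it is established by homological means --- a Braverman--Gaitsgory-type PBW criterion for Koszul algebras over the semisimple subalgebra $\k G$, in the manner of \cite{SW,SW1} --- and Section~4 of the present paper re-derives the four conditions in exactly that spirit via Gerstenhaber brackets. Your route through Bergman's Diamond Lemma is therefore genuinely different: it is the combinatorial counterpart (in the spirit of Berger's quantum PBW theorem \cite{Be} and of \cite{LS}), trading the Koszul machinery for an explicit confluence check on overlap ambiguities. What it buys is elementarity; what it loses is the structural explanation of why there are precisely these four conditions (well-definedness/$G$-invariance plus the three Braverman--Gaitsgory conditions, which Section~4 identifies with $d^{*}\kappa^L=0$, $[\kappa^L,\kappa^L]=2d^{*}\kappa^C$, and $[\kappa^C,\kappa^L]=0$).

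As written, though, your argument is a strategy outline with two concrete gaps. First, the ordering: the straightening rule $h v_j \mapsto \sum_i h^j_i v_i\, h$ neither lowers the $V$-degree nor respects a lexicographic order on the $v$'s (it replaces one letter by a linear combination of all of them), so ``degree-lex on $V$-words with group elements adjoined on the right'' is not by itself a compatible semigroup partial order satisfying the descending chain condition; you need to order words first by the positions of group letters relative to $V$-letters and only then by $V$-degree and lex, and you must also dispose of the group--group--vector and group--group--group ambiguities (or invoke Bergman's relative version over the base ring $\k G$). Second, and more substantially, the entire content of the theorem is the claim that the degree-$2$, degree-$1$, and degree-$0$ components of the fully reduced triple-overlap discrepancy are \emph{exactly} the displayed identities, with those particular products of $q$'s, the conjugation $hgh^{-1}$, the sum over $h\in G$, and the factor $2$ in condition (3); your proposal asserts this correspondence but performs none of the computation, and ultimately falls back on citing \cite{S}. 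For the purposes of the present paper that citation is precisely what the authors do, so nothing is lost in context; but as a self-contained proof the proposal stops at exactly the step where all the work lies.
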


We note that condition (1) above is equivalent to $G$-invariance of $\kappa$,
that is, $${}^h \kappa(v,w) = \kappa ( {}^h v, {}^h w)$$ for all $h\in G$ and $v,w\in V$. 

\begin{example} 
By modifying Example 5.5 of \cite{NW}, we obtain 
 a quantum Drinfeld orbifold algebra for which some $q_{ij}\neq 1$
and $\kappa^L \not\equiv 0$: Let $G$ be a cyclic group of order 3 
generated by $g$.
Let $q$ be a primitive third root of 1 in $\mathbb C$.
Let $V= {\mathbb {C}} ^3$ with basis $v_1,v_2,v_3$ and
$q_{21}=q, \  q_{32}=q, \ q_{13}=q$. 
Take the following diagonal action of $G$ on $V$ with respect
to this basis: 
$$
    {}^g v_1 = q v_1, \ \ \ {}^g v_2 = q^2 v_2, \ \ \ {}^g v_3 = v_3.
$$
Let 
$$\kappa(v_2,v_1) = v_3 , \ \ \ \kappa(v_3,v_2)=0, \ \ \ \kappa(v_1,v_3)=0.
$$
We check the conditions of Theorem 2.1:
Condition (1) is $G$-invariance, and we may check that indeed
$
   \kappa( {}^g v_2, {}^g v_1) = q^3 \kappa(v_2,v_1) = v_3 = {}^g \kappa
   (v_2,v_1),
$
and similarly for other triples consisting of one group element
and two basis vectors. 
Condition (2) holds:
$$
    q_{13}q_{23} v_3 \kappa^L_1(v_2,v_1)  - \kappa^L_1(v_2,v_1) v_3 + 0 + 0 =
     qq^{-1} v_3 v_3 -v_3 v_3 = 0.
$$
Finally, Conditions (3) and (4) hold as all terms are equal to 0. 
The resulting quantum Drinfeld orbifold algebra is
$$
   {\mathcal{H}}_{{\bf q},\kappa} = 
   ( T(V)\rtimes G) / (v_2v_1- qv_1v_2 - v_3 , \ v_3v_2-qv_2v_3, \
   v_1v_3-qv_3v_1 ) .
$$
\end{example}

\quad

\begin{example}\label{ex:trivial}
Another example has trivial group ($G =1$): 
Let $V= \k ^3$ with basis $v_1,v_2,v_3$ and $q_{ij}=-1$ whenever $i\neq j$. 
Let 
$$
  \kappa(v_2,v_1) = v_1 , \ \ \ \kappa(v_3,v_2)=v_3 , \ \ \
    \kappa(v_1,v_3)=0 .
$$
One may check that the conditions of Theorem~2.1 hold, and consequently 
$$\cH_{{\bf q},\kappa} = T(V)/(v_2v_1+v_1v_2-v_1, \
   v_3v_2+v_2v_3 -v_3 , \ v_1v_3 +v_3v_1 ) $$
is a quantum Drinfeld orbifold algebra. 

\end{example}

\quad

Now consider any PBW deformation $U$ of $S_{\bf q}(V)\rtimes G$ in which 
the action of $G$ on $V$ is preserved, that is, the relations $gv = {}^gv g$
($g\in G$, $v\in V$) hold in $U$. 
Then $U$ is a $\Z$-filtered algebra for which $\gr U\cong S_{\bf q}(V)\rtimes G$.
We will show next that $U\cong \cH_{{\bf q},\kappa}$ for some $\kappa$.

\begin{theorem}
Let $U$ be a PBW deformation of $S_{\bf q}(V)\rtimes G$ in which
the action of $G$ on $V$ is preserved. 
Then $U\cong \cH_{{\bf q},\kappa}$, a quantum Drinfeld orbifold algebra as defined
in (\ref{Hqk}), for some $\kappa$. 
\end{theorem}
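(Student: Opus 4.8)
The plan is to recover $\kappa$ directly from the multiplication in $U$ and then to compare bases. By hypothesis $U$ is $\Z$-filtered with $\gr U\cong S_{\bf q}(V)\rtimes G$, the latter graded by placing $V$ in degree $1$ and $G$ in degree $0$. First I would observe that, since $S_{\bf q}(V)\rtimes G$ is generated in degrees $0$ and $1$, the images in $U$ of the basis vectors $v_1,\dots,v_n$ (elements of filtration degree at most $1$) together with the group elements $g\in G$ (elements of filtration degree $0$) generate $U$ as an algebra; equivalently, the PBW set $\{v_1^{m_1}\cdots v_n^{m_n}\# g\mid m_i\ge 0,\ g\in G\}$, which the hypothesis guarantees is a $\k$-basis of $U$, spans $U$. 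In particular, the subspace $U_{\le 1}$ of $U$ consisting of elements of filtration degree at most $1$ has $\{\,\# g,\ v_i\# g\ :\ 1\le i\le n,\ g\in G\,\}$ as a $\k$-basis, so we may identify $U_{\le 1}$ with $(\k\oplus V)\ot_\k\k G$.

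Next I would define $\kappa\colon V\times V\to(\k\oplus V)\ot_\k\k G$ by $\kappa(v_i,v_j):=v_iv_j-q_{ij}v_jv_i\in U_{\le 1}$, the product being taken in $U$, and extend bilinearly. This lands in $U_{\le 1}$ precisely because in $\gr U\cong S_{\bf q}(V)\rtimes G$ one has $\overline{v_iv_j}=q_{ij}\,\overline{v_jv_i}$, so $v_iv_j-q_{ij}v_jv_i$ has vanishing degree-$2$ component. Using $q_{ji}=q_{ij}^{-1}$ one checks at once that $\kappa(v_i,v_j)=-q_{ij}\kappa(v_j,v_i)$, so $\kappa$ is an admissible parameter function and $\cH_{{\bf q},\kappa}$ is defined as in (\ref{Hqk}). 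Since the relations $v_iv_j-q_{ij}v_jv_i-\kappa(v_i,v_j)=0$ hold in $U$ by construction and the skew-group relations $gv={}^gv\,g$ hold in $U$ by hypothesis, the universal property of the quotient $\cH_{{\bf q},\kappa}=(T(V)\rtimes G)/(\cdots)$ gives an algebra homomorphism $\phi\colon\cH_{{\bf q},\kappa}\to U$, which is surjective by the generation statement above.

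Finally I would compare the standard spanning sets. The set $B:=\{v_1^{m_1}\cdots v_n^{m_n}\# g\mid m_i\ge 0,\ g\in G\}$ spans $\cH_{{\bf q},\kappa}$ (one reorders monomials using the defining relations), and $\phi$ carries $B$ onto the corresponding set in $U$, which is a $\k$-basis of $U$ by the PBW hypothesis. A surjection that carries a spanning set onto a linearly independent set is an isomorphism: any relation $\sum_b c_b\,b=0$ in $\cH_{{\bf q},\kappa}$ is pushed by $\phi$ to a relation among linearly independent elements of $U$, forcing all $c_b=0$; thus $\phi$ is injective, and as a byproduct $B$ is a $\k$-basis of $\cH_{{\bf q},\kappa}$, so $\cH_{{\bf q},\kappa}$ is itself a quantum Drinfeld orbifold algebra. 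Hence $\phi\colon\cH_{{\bf q},\kappa}\to U$ is an isomorphism.

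I do not anticipate a serious obstacle; the argument is essentially bookkeeping around the filtration. The one point requiring care is the identification $U_{\le 1}\cong(\k\oplus V)\ot_\k\k G$ and the verification that $v_iv_j-q_{ij}v_jv_i$ genuinely has no degree-$2$ part in $\gr U$ — that is, that the isomorphism $\gr U\cong S_{\bf q}(V)\rtimes G$ is compatible with the chosen generators $v_i$ and $g$ — which is exactly where the hypothesis that the $G$-action on $V$ is preserved, together with the given PBW basis of $U$, is used.
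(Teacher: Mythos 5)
Your proposal is correct and follows essentially the same route as the paper: define $\kappa$ via $\kappa(v_i,v_j)=v_iv_j-q_{ij}v_jv_i\in F_1U$, use preservation of the $G$-action to get a surjection $T(V)\rtimes G\to U$ factoring through $\cH_{{\bf q},\kappa}$, and then force injectivity by a size comparison. The only (harmless) difference is that you conclude injectivity by mapping the PBW spanning set of $\cH_{{\bf q},\kappa}$ onto the PBW basis of $U$, whereas the paper compares dimensions of filtration pieces degree by degree — your phrasing makes slightly more explicit why $\cH_{{\bf q},\kappa}$ itself ends up being a PBW deformation.
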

\begin{proof}
Let $F$ denote the filtration on $U$. 
Since the associated graded algebra of $U$ is $S_{\bf q}(V)\rtimes G$, we may identify
$F_1 U$ with $(V\ot \k G) \oplus \k G$.
By hypothesis, 
$v_iv_j - q_{ij} v_jv_i \in F_1 U$ for each $i,j$, and we set
this element equal to $\kappa^L(v_i,v_j) + \kappa^C(v_i,v_j)$, where
$\kappa^L(v_i,v_j)\in V\ot \k G$ and $\kappa^C(v_i,v_j)\in \k G$,
thus defining $\kappa^L$ and $\kappa^C$ on pairs of basis elements. 
Extend bilinearly to $V\times V$, and set $\kappa = \kappa^L + \kappa^C$.
By  definition, $\kappa(v_i,v_j) = -q_{ij} \kappa(v_j,v_i)$. 
We will show that $U$ is isomorphic to  $\mH_{{\bf q},\kappa}$.
Let $\sigma: T(V)\rtimes G \rightarrow U$ be the algebra homomorphism determined by 
$\sigma(v_i\# 1) = v_i$ and $\sigma(1\# g) = g$ for all $v_i,g$.
There is indeed such  a (uniquely determined) algebra homomorphism since the action of $G$ on $V$ is preserved in $U$  by hypothesis. 
By its definition, 
$\sigma$ is surjective, since $U$ is generated by the $v_i,g$. 
We will show that the kernel of $\sigma$ is precisely the ideal generated
by all elements of the form $v_iv_j - q_{ij}v_jv_i -\kappa(v_i,v_j)$.
Let $I$ be this ideal. 
Then $\cH_{{\bf q},\kappa} = T(V)\rtimes G  / I$ by definition of $\cH_{{\bf q},\kappa}$.
By the definition of $\kappa$, the kernel of $\sigma$ contains $I$, 
and so  $\sigma$ factors through $\cH_{{\bf q},\kappa}$, inducing a surjective
homomorphism $\overline{\sigma} : \cH_{{\bf q},\kappa}\rightarrow U$.
Now in each degree, $\cH_{{\bf q},\kappa}$ and $U$ have the same dimension, as each
has associated graded algebra $S_{\bf q}(V)\rtimes G$. 
This forces $\overline{\sigma}$ to be injective as well.
\end{proof}

\end{section}

%%%%%%%%%%%%%%%%%%%%%%%%%%%%%%%%%%%%%%%%%%%%%%%%%%%%%%%%%%%%%%%%
\begin{section}{Color Lie algebras}

We first recall the definition of a color Lie algebra and of 
its generalized enveloping algebras. For more details, 
see, for example, Petit and Van Oystaeyen \cite{PV}. 

Let $A$ be an abelian group and let 
$\varepsilon: A\times A\rightarrow \k^{\times}$ be an antisymmetric bicharacter,
where $\k^{\times}$ is the group of units in $\k$, that is, 
\begin{equation}
\varepsilon(a, b) \varepsilon(b, a) = 1,
\end{equation}
\begin{equation}
\varepsilon(a, bc) = \varepsilon(a, b) \varepsilon(a, c),
\end{equation}
\begin{equation}
\varepsilon(ab, c) = \varepsilon(a, c) \varepsilon(b, c),
\end{equation}
for all $a, b, c \in A$.

An $(A,\varepsilon)$-{\bf color Lie algebra} is an $A$-graded vector space $\LL=\oplus_{a\in A} \LL_a$ equipped with a bilinear bracket $[-,-]$ for which 
\begin{equation}\label{eqn:gLa}
[\LL_a, \LL_b] \subseteq \LL_{ab},
\end{equation}
\begin{equation}
[x, y] = -\varepsilon(|x|, |y|)[y,x],
\end{equation}
\begin{equation}
\varepsilon(|z|, |x|)[x,[y,z]] + \varepsilon(|x|, |y|)[y,[z,x]] + 
     \varepsilon(|y|, |z|)[z,[x,y]] = 0,
\end{equation}
whenever $a, b\in A$, and $x, y, z\in \LL$ are homogeneous elements 
(any element $x\in \LL_a$ is called homogeneous of degree $a$,
and we write $|x| = a$).

Now let $\LL$ be a color Lie algebra and 
let $\omega: \LL\times \LL\rightarrow \k$ for which
\begin{equation}\label{eqn:omega-condition}
\varepsilon(|z|, |x|)\omega(x,[y,z]) + \varepsilon(|x|, |y|)\omega(y,[z,x]) 
     + \varepsilon(|y|, |z|)\omega (z,[x,y]) = 0
\end{equation}
whenever $x, y, z\in \LL$ are homogeneous elements.
The {\bf generalized enveloping algebra} of $\LL$ associated with $\omega$ is 
$$ U_{\omega}(\LL) := T(\LL)/(v_i v_j - \varepsilon(|v_i|, |v_j|)v_j v_i - [v_i, v_j] 
 - \omega(v_i, v_j)),$$
where $v_i, v_j\in \LL$ range over a basis of homogeneous elements.

If $\LL$ is a Lie algebra, that is if $\varepsilon$ takes only the value 1,
the generalized enveloping algebras are precisely the Sridharan enveloping algebras~\cite{Sridharan}. 
If $\omega \equiv 0$, the generalized enveloping algebras are sometimes called
universal enveloping algebras, as they have a universal property generalizing that
of a universal enveloping algebra of a Lie algebra~\cite{Sch}: 
$U_{0}(\LL)$ is universal with respect to linear maps $f: \LL\rightarrow S$ for
associative algebras $S$ that take the bracket in $\LL$ to the $\varepsilon$-commutator
$[ - , - ]_{\varepsilon}$ on the image of $\LL$ in $S$ ( $[f(x),f(y)]_{\varepsilon} :=
f(x)f(y) - \varepsilon(|x|,|y|) f(y)f(x)$ for all homogeneous $x,y\in\LL$).

\begin{example}\label{Heisenberg}
Let $U$ be the associative algebra generated by $x$, $y$, and $z$ subject
to the relations
$$
   xy + yx = z , \ \ \ xz+zx=0 , \ \ \ yz+zy=0.
$$
This is the universal enveloping algebra of a color Lie algebra analogous
to the Heisenberg Lie algebra. For the definition of the color Lie algebra itself,
one may take $A = \Z_2^3$ with $\varepsilon(a,b) = (-1)^{a_1b_1+a_2b_2+a_3b_3}$
where $a =(a_1,a_2,a_3)$, $b=(b_1,b_2,b_3)$. Take $\LL$ to be of dimension~3, with
basis $x,y,z$ of degrees $(1,1,0), (1,0,1), (0, 1,1)$, respectively. 
\end{example}

The next theorem describes a  relationship between generalized enveloping
algebras $U_{\omega}(\LL)$  and quantum Drinfeld orbifold algebras 
$\cH_{{\bf q},\kappa}$. 
It states that the generalized enveloping algebras of color Lie algebras are
precisely those quantum Drinfeld orbifold algebras with $G=1$ that satisfy two
technical conditions on the parameter function $\kappa$, as detailed in the theorem. 
We will use Theorem~2.1 to prove part (a) of the next theorem; 
alternatively Berger's quantum PBW~Theorem \cite{Be} may be used. 
Part (b) largely follows from Petit and Van Oystaeyen's work on generalized
enveloping algebras of color Lie  algebras.

We will need some notation:
Letting $\mH_{{\bf q},\kappa}$ be a quantum Drinfeld orbifold algebra in which
$G=1$, scalars 
$C_l^{i,j}\in \k$ are defined by 
$$\kappa^L(v_i,v_j) = \sum_l C_l^{i,j} v_l.$$

\begin{theorem}\label{cLaqDoa}
(a) 
Let $U = \mH_{\mathbf{q},\kappa}$ be a quantum Drinfeld orbifold algebra with $G=1$.
Assume  that  for each triple $i,j,l$ of indices ($i\neq j$), 
if $C^{i,j}_l \neq 0$, then $q_{im}q_{jm}=q_{lm}$
for all $m$, and that  
 the left and right sides of the equation in
Theorem~2.1(3) are each equal to 0 for all triples of vectors.
 Then $U\cong U_{\omega}(\LL)$,  a generalized enveloping algebra for some color Lie algebra $\LL$
and  $\omega$ satisfying~(\ref{eqn:omega-condition}).

(b) Let  $U=U_{\omega}(\LL)$ be a generalized  enveloping algebra of a color Lie algebra $\LL$.
 Then $U\cong \mH_{\mathbf{q},\kappa}$, a quantum Drinfeld orbifold algebra as
defined in (\ref{Hqk}),
for some ${\bf q},\kappa$ and $G=1$.
Moreover, for each triple of $i,j,l$ indices ($i\neq j$), if $C^{i,j}_l\neq 0$, then $q_{im}q_{jm}=q_{lm}$ for all $m$,
and the left and right sides of the equation in Theorem~2.1(3) are each equal to 0
for all triples of vectors. 
\end{theorem}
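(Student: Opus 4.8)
The plan is to prove the two directions of Theorem~\ref{cLaqDoa} by carefully matching the defining data of a generalized enveloping algebra with the defining data of a quantum Drinfeld orbifold algebra, using Theorem~2.1 to translate between the color-Lie-type identities and the PBW conditions. For part~(a), I would start from $U=\mH_{\mathbf{q},\kappa}$ with $G=1$, so that $\kappa=\kappa^C+\kappa^L$ with $\kappa^L(v_i,v_j)=\sum_l C_l^{i,j}v_l$ and $\kappa^C(v_i,v_j)\in\k$. I want to produce an abelian group $A$, a bicharacter $\varepsilon$, an $A$-grading on $V$, a bracket $[-,-]$, and a $2$-cochain $\omega$ so that $\varepsilon(|v_i|,|v_j|)=q_{ij}$, $[v_i,v_j]=\kappa^L(v_i,v_j)$, and $\omega(v_i,v_j)=\kappa^C(v_i,v_j)$ on the chosen basis. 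The natural choice of $A$ is the free abelian group on symbols $a_1,\dots,a_n$ (one per basis vector), possibly quotiented to make $\varepsilon$ antisymmetric; the condition $q_{ii}=1$, $q_{ji}=q_{ij}^{-1}$ forces $\varepsilon(a_i,a_i)=1$ and $\varepsilon(a_j,a_i)\varepsilon(a_i,a_j)=1$, which is consistent. Extending $\varepsilon$ bilinearly (multiplicatively) to all of $A$ is automatic since $A$ is free, so $\varepsilon$ is a well-defined antisymmetric bicharacter; I would define $|v_i|:=a_i$.

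The key point is then to verify the three color Lie axioms (\ref{eqn:gLa})--(the Jacobi identity) and the cocycle condition (\ref{eqn:omega-condition}) from the PBW conditions of Theorem~2.1. The grading-compatibility $[\LL_a,\LL_b]\subseteq\LL_{ab}$ is exactly the hypothesis that $C^{i,j}_l\neq 0$ implies $q_{im}q_{jm}=q_{lm}$ for all $m$: this says $\varepsilon(a_ia_j,a_m)=\varepsilon(a_l,a_m)$ for all $m$, hence $a_ia_j$ and $a_l$ have the same degree for the purposes of the bracket, which is what (\ref{eqn:gLa}) demands (I may need to pass to a quotient of $A$ on which $\varepsilon$ is nondegenerate so that "$\varepsilon(a_ia_j,-)=\varepsilon(a_l,-)$" actually yields $a_ia_j=a_l$). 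The antisymmetry axiom for $[-,-]$ is immediate from $\kappa(v_i,v_j)=-q_{ij}\kappa(v_j,v_i)$ restricted to linear parts. The color Jacobi identity should come from Theorem~2.1(2) together with the part of Theorem~2.1(3) whose left-hand side (the quadratic-in-$\kappa^L$ sum) is assumed to vanish; more precisely, condition~(2) is a "$\kappa^L$ is a derivation-like/compatible with $S_{\mathbf q}(V)$" statement, and combined with the vanishing of the left side of~(3), one extracts a genuine Jacobi identity among the structure constants after rescaling the six terms by the appropriate powers of $q$ to turn them into the $\varepsilon$-weighted form in the color Jacobi identity. Finally, the cocycle condition (\ref{eqn:omega-condition}) for $\omega=\kappa^C$ follows from Theorem~2.1(4): the right side of~(4) is $0$, and the left side of~(4), after the substitution $\kappa^C(v_i,v_j)=\omega(v_i,v_j)$, is precisely the $\varepsilon$-weighted sum $\sum\varepsilon(|z|,|x|)\omega(x,[y,z])$ up to an overall nonzero scalar. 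Once all four axioms are checked, the presentation of $U_\omega(\LL)$ matches that of $\mH_{\mathbf{q},\kappa}$ term-by-term, giving the isomorphism.

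For part~(b), I would run the construction in reverse: given $U=U_\omega(\LL)$ with color data $(A,\varepsilon,[-,-],\omega)$ and a homogeneous basis $v_1,\dots,v_n$, set $q_{ij}:=\varepsilon(|v_i|,|v_j|)$, define $\kappa^L(v_i,v_j):=[v_i,v_j]$ and $\kappa^C(v_i,v_j):=\omega(v_i,v_j)$, take $G=1$, and check that the resulting $\kappa$ satisfies all the conditions of Theorem~2.1 so that $\mH_{\mathbf q,\kappa}$ is a quantum Drinfeld orbifold algebra; this is exactly the same dictionary read backwards, so conditions (2), (3), (4) follow from the color Lie axioms (\ref{eqn:gLa}), the color Jacobi identity, and the cocycle condition (\ref{eqn:omega-condition}) respectively, while condition (1) is vacuous since $G=1$. (As remarked in the excerpt, one could instead invoke Berger's quantum PBW theorem here, but the direct verification via Theorem~2.1 is self-contained.) The "moreover" statements then come for free from the construction: $C^{i,j}_l\neq 0$ means $[v_i,v_j]$ has a nonzero $v_l$-component, and by the grading axiom (\ref{eqn:gLa}) the element $v_l$ lies in $\LL_{|v_i||v_j|}$, so $|v_l|=|v_i||v_j|$ and hence $q_{lm}=\varepsilon(|v_l|,|v_m|)=\varepsilon(|v_i||v_j|,|v_m|)=\varepsilon(|v_i|,|v_m|)\varepsilon(|v_j|,|v_m|)=q_{im}q_{jm}$ for all $m$; similarly the vanishing of both sides of the equation in Theorem~2.1(3) is precisely a restatement of the color Jacobi identity (for the left side, which is quadratic in $\kappa^L=[-,-]$) and of the grading constraint forcing the mixed terms to cancel (for the right side, which involves $\kappa^C$ against differences like $v_i-q_{ij}q_{ik}\,{}^g v_i$ that vanish when $G=1$... wait, with $G=1$ one has ${}^gv_i=v_i$, so the right side of~(3) reads $2(\kappa^C(v_j,v_k)(1-q_{ij}q_{ik})v_i+\cdots)$, which need not be identically zero, so this requires the genuine argument: the right side vanishes because for each term the scalar coefficient $1-q_{ij}q_{ik}$ is multiplied by $\kappa^C(v_j,v_k)$, and one shows using the cocycle condition and the grading that these combine to zero — I will supply this computation).

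The main obstacle I anticipate is bookkeeping the precise powers of $q$ (equivalently, values of $\varepsilon$) needed to see that the six-term sums in Theorem~2.1(3) and~(4) coincide, after the substitution $\kappa^L\leftrightarrow[-,-]$, $\kappa^C\leftrightarrow\omega$, with the $\varepsilon$-weighted color Jacobi identity (\ref{eqn:gLa}--Jacobi) and cocycle condition (\ref{eqn:omega-condition}); the coefficients $q_{ij}q_{ik}$ etc.\ in Theorem~2.1 are written in a different but equivalent normalization than the $\varepsilon(|z|,|x|)$ weights, and reconciling them — particularly verifying that the grading hypothesis $q_{im}q_{jm}=q_{lm}$ is exactly what is needed to make the right-hand side of~(3) collapse to zero and to make the bracket land in the correct graded piece — is where the real content of the proof lies. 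A secondary subtlety is ensuring $A$ and $\varepsilon$ can be chosen with $\varepsilon$ nondegenerate (or passing to a quotient) so that equalities of the form $\varepsilon(a,-)=\varepsilon(b,-)$ genuinely yield $a=b$, which is needed for the grading axiom to hold on the nose rather than just "up to the radical of $\varepsilon$."
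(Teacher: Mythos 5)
Your plan for part (a) is essentially the paper's proof: the free abelian group $A=\Z^n$ with $\varepsilon(a_i,a_j)=q_{ij}$, the passage to $A/\mathrm{rad}(\varepsilon)$ so that the hypothesis ``$C^{i,j}_l\neq 0\Rightarrow q_{im}q_{jm}=q_{lm}$'' yields the grading axiom~(\ref{eqn:gLa}) on the nose, the bracket $[v_i,v_j]:=\kappa^L(v_i,v_j)$, and the derivation of the color Jacobi identity by using that same hypothesis to rewrite $[[v_i,v_j],v_k]$ as $-q_{ik}q_{jk}[v_k,[v_i,v_j]]$ in the vanishing left-hand side of Theorem~2.1(3). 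One small correction: Theorem~2.1(2) plays no role in the Jacobi identity (it is the cocycle condition $d^*\kappa^L=0$, a statement about products in $S_{\bf q}(V)$, not about iterated brackets); the vanishing of the left side of~(3) together with the grading hypothesis is all that is used, and all that is needed.

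For part (b) you take a genuinely different route, and it has a gap at exactly the spot you flag. The paper does \emph{not} verify the conditions of Theorem~2.1 directly; it cites Petit--Van Oystaeyen \cite[Theorem~3.1]{PV} to conclude that $\gr U_\omega(\LL)\cong S_{\bf q}(V)$, so that all four conditions of Theorem~2.1 hold by the ``only if'' direction, and then deduces that the right side of~(3) vanishes \emph{because} the left side does and the two are equal. Your direct-verification route must instead prove that the right side, namely $2\bigl(\omega(v_j,v_k)(1-q_{ij}q_{ik})v_i+\omega(v_k,v_i)(q_{ij}q_{ik}-q_{ik}q_{jk})v_j+\omega(v_i,v_j)(q_{ik}q_{jk}-1)v_k\bigr)$, is zero, and this does \emph{not} follow from the cocycle condition~(\ref{eqn:omega-condition}) alone: take $[-,-]\equiv 0$, so that~(\ref{eqn:omega-condition}) is vacuous, and choose $\omega(v_2,v_3)\neq 0$ with $q_{12}q_{13}\neq 1$; then the coefficient of $v_1$ is nonzero. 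What is actually needed is that $\omega$ is homogeneous of degree the identity, i.e.\ $\omega(\LL_a,\LL_b)=0$ unless $ab\in\mathrm{rad}(\varepsilon)$, which forces $q_{ij}q_{ik}=\varepsilon(|v_i|,|v_j||v_k|)=1$ whenever $\omega(v_j,v_k)\neq 0$. That homogeneity is part of the definition of a scalar $2$-cocycle in \cite{PV} but is not among the conditions you (or, to be fair, Section~3 of the paper) list for $\omega$; your promised computation ``using the cocycle condition and the grading'' cannot close without it. Either import that hypothesis explicitly, or adopt the paper's shortcut of invoking \cite[Theorem~3.1]{PV} and the biconditional of Theorem~2.1.
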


\begin{remark}
One may check that Example~\ref{ex:trivial} fails the first condition
(corresponding to nonzero scalars $C^{i,j}_l$) of Theorem~\ref{cLaqDoa}(a); it is not
a generalized enveloping algebra of a color Lie algebra since it cannot 
satisfy~(\ref{eqn:gLa}).
By contrast, Example~\ref{Heisenberg} satisfies both conditions (with $\omega\equiv 0$). 
\end{remark}

\begin{proof}
(a) Let $U = \mH_{{\bf q},\kappa}$ be a quantum Drinfeld orbifold algebra 
as defined in (\ref{Hqk}), with $G=1$,
under the stated assumptions. 
Let $A=\Z^n$, a free abelian group on a choice of
generators $a_1,\ldots,a_n$ (where $n$ is the dimension of 
the vector space $V$). Let
$$
    \varepsilon( a_i, a_j) := q_{ij}
$$
for each $i,j\in \{1,\ldots,n\}$. Then
$   \varepsilon(a_i,a_j)\varepsilon(a_j,a_i) = q_{ij} q_{ji} = 1
$
for all $i,j$, that is, (3.1) holds for the generators of $A$. 
Since $A$ is a free abelian group, we may extend $\varepsilon$
uniquely to an antisymmetric bicharacter on all of $A$
via the relations (3.2), (3.3).
Set $\LL=V$.
We will show that $\LL$ is a color Lie algebra with respect to
a quotient group of $A$. 

Let 
$$
   [v_i, v_j ] := \kappa^L(v_i, v_j).
$$
Then the condition $[x,y]=-\varepsilon(|x|,|y|) [y,x]$ holds for all 
homogeneous $x,y\in\LL$, 
as a result of the condition $\kappa(v_i,v_j) = - q_{ij}\kappa(v_j,v_i)$. 
Note that by the hypothesis on $\kappa$, 
  (3.6) holds as a consequence of Theorem 2.1(3):
The left side of (3) in Theorem 2.1 is assumed equal to 0,
and this condition 
may be rewritten (with $G=1$, $g=1$, $h=1$) 
as 
$$
\begin{aligned}
&q_{ij}q_{ik} [[v_j,v_k],v_i] - [v_i,[v_j,v_k]] + q_{ik}q_{jk}[[v_k,v_i],v_j]\\
 & - q_{ij}q_{ik} [v_j,[v_k,v_i]]+ [[v_i,v_j],v_k] 
   -q_{ik}q_{jk} [v_k,[v_i,v_j]] = 0 .
\end{aligned}
$$
We wish to rewrite half of these terms in order to compare with (3.6).
By hypothesis, if 
 $C^{i,j}_l\neq 0$ for some $i,j,l$, then
$q_{lk} = q_{ik}q_{jk}$ for all $k$, so 
\begin{eqnarray*}
  [[v_i,v_j],v_k] & = & \sum_{l=1}^n C^{i,j}_l [v_l,v_k] \\
   &=& -\sum_{l=1}^n C^{i,j}_l q_{lk} [v_k,v_l]\\
  &=& -\sum_{l=1}^n C^{i,j}_l q_{ik}q_{jk}[v_k,v_l]\\
  &=& - q_{ik}q_{jk}[v_k,\sum_{l=1}^n C^{i,j}_l v_l] \ \ 
   = \ \  - q_{ik}q_{jk} [v_k, [v_i,v_j]].
\end{eqnarray*}
Similarly we have 
$[[v_k,v_i],v_j]  =  - q_{ij}q_{kj} [v_j,[v_k,v_i]]$ and 
$[[v_j,v_k],v_i]  =  - q_{ji}q_{ki} [v_i,[v_j,v_k]]$. 
Substituting into the earlier equation, it now becomes
$$
  [v_i,[v_j,v_k]]+ q_{ij}q_{ik} [v_j,[v_k,v_i]]
   + q_{ik}q_{jk} [v_k,[v_i,v_j]] =0.
$$
Multiplying by $q_{ki}$, we obtain (3.6). 

We will need to pass next to a quotient of $A$ to obtain the required
relation between the bracket and a grading on $\LL$: 
Let 
$$N=\text{ rad}(\varepsilon) = \{a\in A\mid \varepsilon(a,b)=1\ \mbox{ for all } b\in A\}.$$ 
Let $\overline{A}=A/N$ and $\LL_{\overline{a}_i} :=\Span_{\k} \{v_i\}$ for each $i$,
where $\overline{a}_i := a_i N$, and $\LL_{\overline{a}} :=0$
for all other elements $\overline{a}$ of $\overline{A}$. 
It only remains to show that 
$$
\left[\LL_{\overline{a}}, \LL_{\overline{b}}\right]\subseteq \LL_{\overline{ab}}
$$
for all $a,b\in A$. 
By hypothesis, 
 if $C^{i,j}_l \neq 0$ 
in the expression $[v_i,v_j] = \sum_l C_l^{i,j} v_l$,
then $q_{im}q_{jm}=q_{lm}$ for all $m$.
This
implies that 
$$1= q_{im}q_{jm}q_{lm}^{-1} = 
\varepsilon(a_i,a_m)\varepsilon(a_j,a_m)\varepsilon(a_l^{-1},a_m)=
\varepsilon(a_ia_ja_l^{-1},a_m).$$ 
It follows that 
$a_ia_ja_l^{-1} \in N$, so $\overline{a_ia_j} = \overline{a_l}$. Thus 
$[\LL_{\overline{a}}, \LL_{\overline{b}}]\subseteq \LL_{\overline{ab}}$ for all $a,b\in A$,
implying that  
$\LL$ is a color Lie algebra.

Now, 
for all $i, j$, let
$$
\omega(v_i, v_j) := \kappa^C(v_i, v_j).
$$
Then 
(3.7) is a consequence of Theorem 2.1(4) by a similar computation to
that above for (3.6). 
Hence $U\cong U_{\omega}(\LL)$,  a generalized enveloping algebra of the color Lie algebra $\LL$.

(b) 
Let $U=U_{\omega}(\LL)$ be a generalized  enveloping algebra of a color Lie algebra $\LL$.
Let $V=\LL$.
Choose a basis $v_1,\ldots,v_n$ of 
$V$ consisting of homogeneous elements and 
for each $i,j$, let
$$
q_{ij}=\varepsilon(|v_i|, |v_j|).
$$
Let $G=1$. Set $\kappa^L(v_i,v_j) := [v_i,v_j]$ and $\kappa^C(v_i,v_j) := \omega (v_i,v_j)$. 
By \cite[Theorem~3.1]{PV}, the associated graded algebra of $U$ is $S_{\bf q}(V)$. 
So the conditions of Theorem~2.1 must hold, and $\cH_{{\bf q},\kappa}$ is a quantum
Drinfeld orbifold algebra.
By their definitions, $\cH_{{\bf q},\kappa} = U_{\omega}(\LL)$. 

One may check that (3.4) implies that if $C^{i,j}_l\neq 0$, then $q_{im}q_{jm}=q_{lm}$
for all $m$ (similarly to computations in the proof of part (a)). 
From this and (3.6) it now follows that the left side of the equation
in Theorem 2.1(3) is equal to 0
(similarly to computations in the proof of part (a)), and therefore the
right side is also 0. 
\end{proof}

\begin{remark}
The hypothesis on the scalars $C^{i,j}_l$ in Theorem~\ref{cLaqDoa}(a) is not 
as restrictive as it appears. 
If we assume that 
$\kappa$ is a Hochschild 2-cocycle
written in the canonical form given in \cite[Theorem 4.1]{NSW}, this
condition holds automatically as a consequence of the relations defining
the space $C_1$ there (see \cite[(12)]{NSW}). 
\end{remark}

\end{section}
%%%%%%%%%%%%%%%%%%%%%%%%%%%%%%%%%%%%%%%%%%%%%%%%%%%%%%%%%%%%%%%%

\begin{section}{Homological conditions}\label{sec:hom}

We first recall the definition of Hochschild cohomology and some resolutions that we will need.
For more details, see, e.g., \cite{Gerstenhaber}.

Let $R$ be an algebra over $\k$, and let $M$ be an $R$-bimodule.
Identify $M$ with a (left) $R^e$-module, where $R^e=R\ot R^{\op}$;
here, $R^{\op}$ denotes the algebra $R$ with the opposite multiplication.
The {\bf Hochschild cohomology} of $R$ with coefficients in $M$ is
$$
  \HH^{\bu}(R,M) := \Ext^{\bu}_{R^e}(R,M),
$$
where $R$ is itself considered to be an $R^e$-module under left and right multiplication.

Let $R=S\rtimes G$, where $S$ is a $\k$-algebra with an action of a group $G$
by automorphisms.
Since the characteristic of $\k$ is 0, we have
$$\HH^{\bu}(S\rtimes G)\cong \HH^{\bu}(S, S\rtimes G)^G , $$
where the superscript $G$ denotes invariants under the induced action of $G$
(see, e.g., \cite{Stefan}). 
As a graded vector space, 
$$\HH^{\bu}(S,S\rtimes G)=\Ext_{S^e}^{\bu}(S,S\rtimes G)\cong \bigoplus_{g\in G} 
  \Ext_{S^e}^{\bu}(S,S_g) , $$
where, as before, $S_g$ denotes the $g$-component $S\ot  \k g$.

Letting $S=S_{\bf q}(V)$, each summand above 
can be explicitly determined using the following 
free $S^e$-resolution of $S=S_{\bf q}(V)$, 
called its {\bf Koszul resolution} (see \cite[Proposition 4.1(c)]{W}):
\begin{equation}\label{koszul-res}
\cdots \xrightarrow{} S^e\ot\Wedge^2(V) \xrightarrow{d_2}
S^e \otimes \Wedge ^1(V) \xrightarrow{d_1}
S^e \xrightarrow{\text{mult}} S \xrightarrow{} 0,
\end{equation}
with differentials for $1\leq p\leq n$: 
\begin{equation*}
\begin{split}
&d_p(1\ot 1 \ot v_{j_1}\wedge\cdots\wedge v_{j_p})\\
&=  \sum_{i=1}^p (-1)^{i+1} \left[ \left(\prod_{s=1}^{i} q_{j_s, j_i} \right)
    v_{j_i}\ot 1 - \left(\prod_{s=i}^p q_{j_i, j_s} \right) \ot v_{j_i} \right] \ot
    v_{j_1}\wedge \cdots \wedge \hat{v}_{j_i} \wedge\cdots \wedge v_{j_p}
\end{split}
\end{equation*}
whenever $1\leq j_1<\cdots <j_p\leq n$. Applying $\Hom_{S^e}(-,S_g)$, 
dropping the term $\Hom_{S^e}(S,S_g)$, and identifying $\Hom_{S^e}(S^e\ot \Wedge^p
(V), S_g)$ with $\Hom_{\k}(\Wedge^p(V),S_g)$,
we obtain
\begin{equation}
0 \xrightarrow{}  S_g \xrightarrow{d_1^*} 
S_g \ot \Wedge^1(V^*) \xrightarrow{d_2^*} 
S_g \ot \Wedge^2(V^*) \xrightarrow{} \cdots  , 
\end{equation}
where $V^*$ denotes the vector space dual to $V$. 
Thus the space of cochains is
$$C^{\bu}=\bigoplus_{g\in G}C_{g}^{\bu},\text{ where } C_{g}^{p}=S_g\ot \Wedge^{p}(V^*),$$
for each degree $p$ and $g\in G$.
For convenience in notation, we define
$$
   v_j\wedge v_i := - q_{ji} v_i\wedge v_j
$$
whenever $i<j$ (in contrast to the standard exterior product).

We view the function $\kappa$, in the definition (\ref{Hqk}) of quantum
Drinfeld orbifold algebra,
as an element of $C^2$ by setting $\kappa(v_i\wedge v_j) = \kappa(v_i,v_j)$
for all $i,j$. 

The {\bf bar resolution} of any $\k$-algebra $R$ is:
\begin{equation}\label{bar-res}
 \cdots \xrightarrow{\delta_3} R^{\ot 4} \xrightarrow{\delta_2}
  R^{\ot 3}\xrightarrow{\delta_1} R^e \xrightarrow{\text{mult}} R \xrightarrow{ } 0 
\end{equation}
where 
$\delta_m(r_0\ot \cdots\ot r_{m+1}) =\sum_{i=0}^m (-1)^i r_0\ot\cdots\ot
    r_ir_{i+1}\ot \cdots \ot r_{m+1}$ for all $r_0,\ldots,r_{m+1}\in R$,
and the action of $R^e$ is by multiplication on the leftmost and
rightmost factors.

From \cite{W} (see also  \cite{NSW}), 
maps $\phi_p: S^e\ot \Wedge^p(V)\rightarrow S^{\ot (p+2)}$ defining
an embedding from the Koszul resolution to the bar resolution of $S=S_{\bf q}(V)$ are given by 
\begin{equation}\label{phim}
  \phi_p(1\ot 1 \ot v_{j_1}\wedge \cdots \wedge v_{j_p})
  = \sum_{\pi\in S_p} (\sgn\pi) q_{\pi}^{j_1,\ldots,j_p} \ot v_{j_{\pi(1)}}\ot \cdots
    \ot v_{j_{\pi(p)}} \ot 1
\end{equation}
where the scalars $q_{\pi}^{j_1,\ldots,j_p}$ are determined by the equation 
$q_{\pi}^{j_1,\ldots,j_p} v_{j_{\pi(1)}}\cdots v_{j_{\pi(p)}} 
  = v_{j_1}\cdots v_{j_p}$.
We wish to use maps $\psi_p: S^{\ot (p+2)}\rightarrow S^e\ot \Wedge^p(V)$ defining a chain map 
from the bar resolution to the Koszul resolution.
For our purposes, 
we need only define these maps for particular arguments in low degrees:
Let $\psi_0$ be the identity map, and $\psi_1 (1\ot v_i\ot 1) = 1\ot 1\ot v_i$. 
One checks directly that $\psi_0\delta_1$ and $d_1\psi_1$ take the same
values on elements of the form $1\ot v_i\ot 1$. We define
$$
   \psi_1(1\ot v_iv_j\ot 1) = \frac{1}{2} (q_{ij}\ot v_i + v_i\ot 1) \ot v_j
      + \frac{1}{2} (q_{ij}v_j\ot 1 + 1\ot v_j) \ot v_i
$$
for all $i,j$. (This is a different, more symmetric, choice than that made in
\cite{NW}, and it will better suit our purposes.)
Again we may check that $\psi_0 \delta_1$ takes the same values as $d_1\psi_1$
on elements of the form $1\ot v_iv_j\ot 1$.
The map $\psi_1$ may be extended to elements of degrees higher than 2 in 
$S_{\bf q}(V) ^{\ot 3}$, but we will not need these further values, and they
will not affect our calculations in the next step.
Our choices allow us to define
$$
  \psi_2(1\ot v_i\ot v_j\ot 1) = \frac{1}{2} \ot 1 \ot v_i\wedge v_j 
$$
whenever $i\neq j$, and we may check that $\psi_1\delta_2$ and $d_2\psi_2$ take
the same values on elements of the form $1\ot v_i\ot v_j \ot 1$. 
(We may take $\psi_2(1\ot v_i\ot v_i\ot 1)=0$.)
As a consequence, 
$$ 
\psi_2(v_i\ot v_j - q_{ij}v_j\ot v_i) = v_i\wedge v_j\ \text{ for } i<j
$$
(here we have dropped extra tensor factors of 1), and thus $\psi_2\phi_2$
is the identity map on input of this form, as is $\psi_1\phi_1$ on the 
input considered above.

If $\alpha$ and $\beta$ are elements of $\Hom_{R^e}(R^{\otimes 4}, R)\cong \Hom_{\k}(R^{\otimes 2}, R)$,
for any algebra $R$, then their {\bf circle product} 
$\alpha \circ \beta \in \Hom_{\k}(R^{\otimes 3}, R)$ is defined by 
$$
\alpha \circ \beta(r_1\otimes r_2\otimes r_3) := 
  \alpha(\beta(r_1\otimes r_2)\otimes r_3) - \alpha(r_1\otimes \beta(r_2\otimes r_3))
$$
for all $r_1, r_2, r_3\in R$. The {\bf Gerstenhaber bracket} in degree 2 is then
$$
[\alpha, \beta] := \alpha \circ \beta + \beta \circ \alpha.
$$
In our setting, $R=S_{\bf q}(V) \rtimes G$, whose Hochschild cohomology we identify with
the $G$-invariant subalgebra of $\HH^{\DOT} (S, S\rtimes G)$.
We may use either the bar resolution or the Koszul resolution of $S=S_{\bf q}(V)$ to gain information
about this Hochschild cohomology. 
If $\alpha$ and $\beta$ are given as cocycles on the Koszul resolution 
(\ref{koszul-res}) instead
of on the bar resolution (\ref{bar-res}), 
we apply the chain map $\psi$ to convert $\alpha$ and $\beta$ to functions on the bar 
resolution, compute the Gerstenhaber bracket of these functions, 
and then apply $\phi$ to convert back to a function on the Koszul resolution. Thus in this case, 
$$[\alpha, \beta]:= \phi^*(\psi^*(\alpha)\circ \psi^*(\beta) + \psi^*(\beta)\circ \psi^*(\alpha)).$$
Of course, the images of $\alpha$ and $\beta$ may involve elements in $S\rtimes G \setminus S$,
in which case we employ a standard technique to manage the group elements that appear in
such a computation:

\begin{lemma}
Let $\mu: S_{\bf q}(V)\ot S_{\bf q}(V) \rightarrow S_{\bf q}(V)\rtimes G$ be a Hochschild
2-cocycle representing an element of $\HH^2(S_{\bf q}(V) , S_{\bf q}\rtimes G)$. 
Then $\mu$ may be extended to 
a Hochschild 2-cocycle for $S_{\bf q}(V)\rtimes G$ by defining
$$
  \mu( r\# g, s\# h) = \mu(r,  {}^gs ) gh
$$
for all $r,s\in S_{\bf q}(V)$ and $g,h\in G$. 
\end{lemma}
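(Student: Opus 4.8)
The plan is to verify directly that the prescribed extension, which I denote $\bar\mu$, is a Hochschild $2$-cocycle for the algebra $R:=S_{\bf q}(V)\rtimes G$ with coefficients in $R$ itself; writing $\delta$ for the Hochschild coboundary on cochains, this is the identity $a\,\bar\mu(b,c)-\bar\mu(ab,c)+\bar\mu(a,bc)-\bar\mu(a,b)\,c=0$ for all $a,b,c\in R$. First I would record that $\bar\mu$ is well defined and genuinely extends $\mu$: since $R=\bigoplus_{g\in G}S_{\bf q}(V)\# g$ and $r\mapsto r\# g$ is a linear isomorphism onto the $g$-th summand, the stated formula is $\k$-bilinear on each $(S_{\bf q}(V)\# g)\ot(S_{\bf q}(V)\# h)$, hence determines a unique $\k$-bilinear map $R\ot R\to R$, and taking $g=h=1$ shows that it restricts to $\mu$ on $S_{\bf q}(V)\ot S_{\bf q}(V)$.

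By $\k$-bilinearity it is enough to evaluate the coboundary on triples $r\# g$, $s\# h$, $t\# k$ with $r,s,t\in S_{\bf q}(V)$ and $g,h,k\in G$. I would expand each of the four terms using the product rule $(r\# g)(s\# h)=r\,\lexp{g}{s}\# gh$ in $R$ and the definition of $\bar\mu$; the one identity worth isolating is that $1\# g$ commutes past an arbitrary $X\in R$ by conjugation, $(1\# g)\,X=(\lexp{g}{X})(1\# g)$, which lets one pull a common right factor $1\# ghk$ out of all four terms. Recalling that the $S_{\bf q}(V)$-bimodule structure on $R$ is given by $r\cdot x=(r\# 1)x$ and $x\cdot r=x(r\# 1)$, one collects the coefficients into
\[
a\,\bar\mu(b,c)-\bar\mu(ab,c)+\bar\mu(a,bc)-\bar\mu(a,b)\,c=\Bigl((\delta\mu)(r,\lexp{g}{s},\lexp{gh}{t})+r\cdot\bigl(\lexp{g}{(\mu(s,\lexp{h}{t}))}-\mu(\lexp{g}{s},\lexp{gh}{t})\bigr)\Bigr)(1\# ghk),
\]
for $a=r\# g$, $b=s\# h$, $c=t\# k$, where on the right $\delta\mu$ denotes the Hochschild coboundary of $\mu$ as a cochain for $S_{\bf q}(V)$ with coefficients in $R$.

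To conclude, the first summand on the right vanishes because $\mu$ is a $2$-cocycle for $S_{\bf q}(V)$ by hypothesis, and the second vanishes provided $\mu$ is $G$-invariant, that is $\lexp{g}{(\mu(s,u))}=\mu(\lexp{g}{s},\lexp{g}{u})$ for all $g\in G$ and $s,u\in S_{\bf q}(V)$ — which holds for the cocycles to which this lemma is applied, since these represent classes in $\HH^2(S_{\bf q}(V)\rtimes G)\cong\HH^2(S_{\bf q}(V),S_{\bf q}(V)\rtimes G)^G$ and so may be taken in the $G$-invariant part of the cochain complex. Therefore $\bar\mu$ is a $2$-cocycle for $R$, as claimed. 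I expect the main obstacle to be bookkeeping rather than anything conceptual: one must keep straight the three distinct roles played by group elements in the computation — multiplication inside $R$, the action of $g$ occurring inside an argument of $\mu$, and conjugation when moving $1\# g$ past an element of $R$ — so that all four terms acquire the common right factor $1\# ghk$ and the expression collapses exactly to the coboundary of $\mu$, together with the $G$-invariance term that makes that hypothesis essential.
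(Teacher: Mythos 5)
Your proof is correct, but it takes a different route from the paper's. The paper disposes of the lemma by citing precedent (\cite[Lemma~6.2]{SW}) and sketching the conceptual reason: in characteristic $0$ one obtains a bimodule resolution of $S_{\bf q}(V)\rtimes G$ by tensoring a bimodule resolution of $S_{\bf q}(V)$ with $\k G$ on the right, with the semidirect product action on the left, and the stated extension formula is just what the induced cochain looks like on the bar complex. You instead verify the cocycle identity for $\bar\mu$ by direct expansion, and your collapsed expression
$\bigl((\delta\mu)(r,\lexp{g}{s},\lexp{gh}{t})+r\cdot(\lexp{g}{(\mu(s,\lexp{h}{t}))}-\mu(\lexp{g}{s},\lexp{gh}{t}))\bigr)(1\# ghk)$
is exactly right. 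The main thing your computation buys is that it makes visible a hypothesis the lemma leaves implicit: the extension is a cocycle only when $\mu$ is $G$-invariant, since the second summand does not vanish for an arbitrary cocycle on $S_{\bf q}(V)$ with coefficients in $S_{\bf q}(V)\rtimes G$. You handle this correctly by noting that the cocycles to which the lemma is applied may be taken in the $G$-invariant part of the cochain complex (via $\HH^{\bu}(S\rtimes G)\cong \HH^{\bu}(S,S\rtimes G)^G$, and indeed $G$-invariance of $\kappa$ is condition (1) of Theorem~2.1). The paper's resolution-theoretic argument absorbs this same point into the choice of $G$-action on the tensored resolution; your version is more elementary and self-contained, at the cost of the bookkeeping you describe.
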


\begin{proof}
This is standard; see, e.g.\ \cite[Lemma 6.2]{SW}.
Since the characteristic of $\k$ is 0, a bimodule resolution for $S_{\bf q}(V)\rtimes G$
is given by tensoring (over $\k$) a bimodule resolution for $S_{\bf q}(V)$ with $\k G$ on one side
(say the right). The action of $G$ on the left is taken to be the
semidirect product action.
\end{proof}

We are now ready to express the PBW conditions of Theorem 2.1 in terms
of the Gerstenhaber algebra structure of Hochschild cohomology. 
The following theorem  is similar to \cite[Theorem~7.2]{SW}.
It gives  necessary and sufficient conditions for $\kappa^L$ and $\kappa^C$ to define a 
quantum Drinfeld orbifold algebra, in terms of their Gerstenhaber brackets.
\begin{theorem} The algebra $\cH_{{\bf q},\kappa}$, defined in (1.1),
is a quantum Drinfeld orbifold algebra if and only if the following conditions hold:
\begin{itemize}
\item $\kappa$ is  $G$-invariant.
\item  $\kappa^L$ is a cocycle, that is $d^{*}\kappa^L=0$.
\item  $[\kappa^L, \kappa^L] = 2d^{*}\kappa^C$ as cochains.
\item  $[\kappa^C, \kappa^L] = 0$ as cochains.
\end{itemize}
\end{theorem}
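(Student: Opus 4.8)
The plan is to deduce the theorem from Theorem~2.1 by matching the four homological conditions, as \emph{literal cochain identities}, with conditions (1)--(4) of that theorem. Condition (1) of Theorem~2.1 is exactly $G$-invariance of $\kappa$, as already observed after its statement, so that bullet is immediate. It then suffices to show the equivalences: Theorem~2.1(2) $\Longleftrightarrow$ $d^{*}\kappa^L=0$; Theorem~2.1(3) $\Longleftrightarrow$ $[\kappa^L,\kappa^L]=2d^{*}\kappa^C$; and Theorem~2.1(4) $\Longleftrightarrow$ $[\kappa^C,\kappa^L]=0$, where both sides of each identity are read as elements of $C^3=\bigoplus_{g\in G}S_g\ot\Wedge^3(V^*)$, evaluated on the basis elements $1\ot1\ot v_i\wedge v_j\wedge v_k$ with $i<j<k$.

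First I would treat the cocycle condition. Writing $d_3^{*}$ as the dual of the Koszul differential $d_3$ of (\ref{koszul-res}) and pairing it with $\kappa^L$, one uses that the right $S$-action on the summand $S_g=S\ot\k g$ twists by $g$, so right multiplication by $v_j$ produces the factor $\lexp{g}v_j$; reading off the six terms in the $g$-component and reordering indices via the convention $v_j\wedge v_i:=-q_{ji}v_i\wedge v_j$ and the relation $\kappa(v_i,v_j)=-q_{ij}\kappa(v_j,v_i)$, one recovers precisely the six-term expression of Theorem~2.1(2). Thus $d^{*}\kappa^L=0$ \emph{is} Theorem~2.1(2). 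The same map $d_3^{*}$ applied to $\kappa^C$ (whose values are scalars, hence central) yields, after the same reordering, exactly one half of the right-hand side of Theorem~2.1(3); this is the one place where the explicit ``2'' in that right-hand side gets accounted for.

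The substantive part is the two bracket computations, carried out with the explicit chain maps $\phi$ and $\psi$ recalled just above. To evaluate $[\kappa^L,\kappa^L]$ on the Koszul resolution I would apply $\psi^{*}$ to move $\kappa^L$ to the bar resolution, form the circle product $\psi^{*}(\kappa^L)\circ\psi^{*}(\kappa^L)$ on the six permuted arguments of $\phi_3(1\ot1\ot v_i\wedge v_j\wedge v_k)$, and reassemble via $\phi_3^{*}$. The symmetric choice $\psi_2(1\ot v_a\ot v_b\ot1)=\tfrac12\ot1\ot v_a\wedge v_b$ makes $\psi^{*}(\kappa^L)(v_a\ot v_b)=\tfrac12\kappa^L(v_a,v_b)=\tfrac12\sum_h\kappa_h^L(v_a,v_b)\# h$; feeding this into the outer $\kappa^L$ forces the group element $h$ past the remaining vector through the action, by the Lemma above, which is exactly what produces the terms $\kappa_{gh^{-1}}^L(\kappa_h^L(-,-),\lexp{h}(-))$ and the sum over $h\in G$ in the $\# g$-component. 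Pairing the permutations related by transposing the two bracketed slots — using $\kappa(v_a,v_b)=-q_{ab}\kappa(v_b,v_a)$, which cancels the transposition sign against the scalar $q_\pi$ from $\phi_3$ — collapses the twelve contributions to the six terms of the left-hand side of Theorem~2.1(3); tracking the constants (a factor $2$ from $[\alpha,\alpha]=2\alpha\circ\alpha$, a factor $\tfrac14$ from the two $\psi_2$'s, and a factor $2$ from the pairing) gives overall coefficient $1$. Together with the previous paragraph this identifies $[\kappa^L,\kappa^L]=2d^{*}\kappa^C$ with Theorem~2.1(3). The bracket $[\kappa^C,\kappa^L]$ is handled identically, only more easily: $\psi^{*}(\kappa^L)\circ\psi^{*}(\kappa^C)$ vanishes termwise because $\kappa^C$ takes values in $\k G$, so the vector slot of $\psi^{*}(\kappa^L)$ receives nothing, leaving $[\kappa^C,\kappa^L]=\psi^{*}(\kappa^C)\circ\psi^{*}(\kappa^L)$, whose $\phi_3^{*}$-pullback is, up to a harmless overall nonzero scalar, the six-term sum of Theorem~2.1(4); hence $[\kappa^C,\kappa^L]=0$ is exactly Theorem~2.1(4).

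The main obstacle I anticipate is the bookkeeping in that last paragraph: simultaneously controlling the scalars $q_\pi^{j_1,j_2,j_3}$ inserted by $\phi_3$, the scalars $q_{ab}$ inserted by $\psi_1$, the exterior-product convention, the signs of the circle product, and — most delicately — the semidirect-product twisting of the Lemma above, which is precisely the mechanism converting a ``flat'' circle product into the $G$-graded expressions $\sum_{h\in G}\kappa_{gh^{-1}}^{L}(\kappa_h^L(-,-),\lexp{h}(-))$ and their $\kappa^C$-analogues appearing in Theorem~2.1(3)--(4). I would organize this as a single computational lemma evaluating $\phi_3^{*}(\psi^{*}(\kappa^L)\circ\psi^{*}(\kappa^L))$ and $\phi_3^{*}(\psi^{*}(\kappa^C)\circ\psi^{*}(\kappa^L))$ on $1\ot v_i\ot v_j\ot v_k\ot1$ and its cyclic rotations, after which the theorem is a term-by-term comparison with Theorem~2.1. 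Verifying the factor-of-$2$ normalization in $[\kappa^L,\kappa^L]=2d^{*}\kappa^C$ is the one spot where the symmetric choices of $\psi_1,\psi_2$ genuinely matter, and is the reason those choices were made.
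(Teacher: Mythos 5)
Your proposal is correct and follows essentially the same route as the paper's proof: both establish the theorem by matching each of the four homological conditions, as literal cochain identities evaluated on $v_i\wedge v_j\wedge v_k$, with the corresponding condition of Theorem~2.1, using the explicit chain maps $\phi$ and $\psi$ (with the symmetric choice of $\psi_2$ accounting for the normalization in $[\kappa^L,\kappa^L]=2d^*\kappa^C$) and the Lemma to move group elements. Your added observation that $\psi^*(\kappa^L)\circ\psi^*(\kappa^C)$ vanishes, so that $[\kappa^C,\kappa^L]$ reduces to the single composite appearing in Theorem~2.1(4), is a correct detail the paper leaves implicit.
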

\begin{proof} 
We will show that conditions (1)--(4) of Theorem 2.1 are equivalent to the four conditions 
stated in the theorem, respectively. 

We have already discussed the equivalence of the $G$-invariance condition
with Theorem 2.1(1).

Next note that 
$d^{*}\kappa^L=\kappa^L\circ d=0$ exactly when $\kappa^L\circ d_3(v_i\wedge v_j\wedge v_k)=0$ for all $i,j,k$, where 
\begin{equation*}
\begin{split}
d_3(v_i\wedge v_j\wedge v_k)
= & (v_i\ot 1-q_{ij}q_{ik}\ot v_i)\ot v_j\wedge v_k - (q_{ij}v_j\ot 1 - q_{jk}\ot v_j)\ot v_i\wedge v_k\\ & \hspace{2cm}+(q_{ik}q_{jk}v_k\ot 1-1\ot v_k)\ot v_i\wedge v_j . 
\end{split}
\end{equation*}\\
In other words, $d^{*}\kappa^L=0$ when\\
\begin{equation*}
\begin{split}
&v_i\kappa^L(v_j,v_k) - q_{ij}q_{ik}\kappa^L(v_j,v_k)v_i - q_{ij} v_j\kappa^L(v_i,v_k) \\
&+ q_{jk}\kappa^L(v_i,v_k)v_j + q_{ik}q_{jk}v_k\kappa^L(v_i,v_j)
 - \kappa^L(v_i,v_j)v_k = 0 
\end{split}
\end{equation*}
for all $i<j<k$. 
Multiply by $q_{ji}q_{ki}q_{kj}$ to obtain
\begin{equation*}
\begin{split}
&q_{ji}q_{ki}q_{kj}v_i\kappa^L(v_j,v_k) - q_{kj}\kappa^L(v_j,v_k)v_i - q_{kj}q_{ki} v_j\kappa^L(v_i,v_k) \\ &+ q_{ji}q_{ki}\kappa^L(v_i,v_k)v_j + q_{ji}v_k\kappa^L(v_i,v_j)
 - q_{ji}q_{ki}q_{kj}\kappa^L(v_i,v_j)v_k = 0 .
\end{split}
\end{equation*}
Using the relation $\kappa^L(v_l,v_m)= - q_{lm} \kappa^L(v_m,v_l)$, we may rewrite the
equation as 
\begin{equation*}
\begin{split}
&-q_{ji}q_{ki}v_i\kappa^L(v_k,v_j) + \kappa^L(v_k,v_j)v_i - q_{kj}q_{ki} v_j\kappa^L(v_i,v_k) \\ &+ q_{ji}q_{ki}\kappa^L(v_i,v_k)v_j - v_k\kappa^L(v_j,v_i)
 + q_{ki}q_{kj}\kappa^L(v_j,v_i)v_k = 0 .
\end{split}
\end{equation*}
This is precisely  Theorem 2.1(2), once we substitute $\kappa^L( - , - ) = \sum_{g\in G}
\kappa^L_g ( - , - ) \# g$, 
move group elements to the right, and apply the relation
$\kappa^L(v_i,v_k)= - q_{ik}\kappa^L(v_k,v_i)$.

Identify $\Hom_{R^e}(R^{\ot(p+2)}, - )$ with $\Hom_{\k}(R^{\ot p}, - )$ and 
$\Hom_{S^e}(S^e\ot \Wedge^p(V), - )$ with $\Hom_{\k}(\Wedge^p(V), - )$. 
If $\alpha,\beta\in\Hom_{S^e}(S^e \ot \Wedge^2(V), S\rtimes G)^G$, then by definition, 
$$(\alpha \circ \beta)(v_i \wedge v_j \wedge v_k) = (\psi^*(\alpha)\circ \psi^*(\beta))\phi(v_i \wedge v_j \wedge v_k) . $$ 
Applying (4.4), we thus have   
\begin{eqnarray*}
&&\hspace{-1cm}(\alpha \circ \beta)(v_i \wedge v_j \wedge v_k)\\
&=& (\psi^*(\alpha)\circ \psi^*(\beta))(v_i\ot v_j\ot v_k - q_{ij}v_j\ot v_i\ot v_k - q_{jk}v_i\ot v_k\ot v_j - q_{ik}q_{ij}q_{jk}v_k\ot v_j\ot v_i \\
&&\hspace{5cm}+ q_{ij}q_{jk}v_j\ot v_k\ot v_i + q_{ik}q_{jk}v_k\ot v_i\ot v_j)\\\\
&=& \psi^{*}(\alpha)(\beta(v_i\ot v_j)\ot v_k - v_i\ot \beta(v_j\ot v_k)) - q_{ij} \psi^{*}(\alpha)(\beta(v_j\ot v_i)\ot v_k - v_j\ot \beta(v_i\ot v_k))\\
&&- q_{jk} \psi^{*}(\alpha)(\beta(v_i\ot v_k)\ot v_j - v_i\ot \beta(v_k\ot v_j)) - q_{ik}q_{ij}q_{jk} \psi^{*}(\alpha)(\beta(v_k\ot v_j)\ot v_i - v_k\ot \beta(v_j\ot v_i))\\
&&+ q_{ij}q_{ik} \psi^{*}(\alpha)(\beta(v_j\ot v_k)\ot v_i - v_j\ot \beta(v_k\ot v_i)) + q_{ik}q_{jk} \psi^{*}(\alpha)(\beta(v_k\ot v_i)\ot v_j - v_k\ot \beta(v_i\ot v_j))\\\\
&=& \psi^{*}(\alpha)(\beta(v_i\ot v_j)\ot v_k - q_{ij} \beta(v_j\ot v_i)\ot v_k) + \psi^{*}(\alpha)(q_{ij}q_{ik} \beta(v_j\ot v_k)\ot v_i - q_{ik}q_{ij}q_{jk} \beta(v_k\ot v_j)\ot v_i)\\
&&+ \psi^{*}(\alpha)(q_{ik}q_{jk} \beta(v_k\ot v_i)\ot v_j - q_{jk} \beta(v_i\ot v_k)\ot v_j) - \psi^{*}(\alpha)(v_i\ot \beta(v_j\ot v_k) - q_{jk} v_i\ot \beta(v_k\ot v_j))\\
&&\hspace{-.5cm}- \psi^{*}(\alpha)(q_{ij}q_{ik} v_j\ot \beta(v_k\ot v_i) - q_{ij} v_j\ot \beta(v_i\ot v_k)) - \psi^{*}(\alpha)(q_{ik}q_{jk} v_k\ot \beta(v_i\ot v_j) - q_{ik}q_{ij}q_{jk} v_k\ot \beta(v_j\ot v_i))\\\\
&=& \psi^*(\alpha)(\beta(v_i\wedge v_j)\ot v_k) + q_{ij}q_{ik}\psi^*(\alpha)(\beta(v_j\wedge v_k)\ot v_i) +  q_{ik}q_{jk} \psi^*(\alpha)(\beta(v_k\wedge v_i)\ot v_j)\\ 
&&- \psi^*(\alpha)(v_i\ot \beta(v_j\wedge v_k)) - q_{ij}q_{ik} \psi^*(\alpha)(v_j\ot \beta(v_k\wedge v_i)) - q_{ik}q_{jk} \psi^*(\alpha)(v_k\ot \beta(v_i\wedge v_j))\\\\
&=& \psi^*(\alpha)(\beta(v_i\wedge v_j)\ot v_k - q_{ik}q_{jk} v_k\ot \beta(v_i\wedge v_j))\\
  && + q_{ij}q_{ik}\psi^*(\alpha)(\beta(v_j\wedge v_k)\ot v_i 
- q_{ji}q_{ki}v_i\ot \beta(v_j\wedge v_k))\\
  &&  + q_{ik}q_{jk}\psi^*(\alpha)(\beta(v_k\wedge v_i)\ot v_j - q_{ij}q_{kj} v_j\ot \beta(v_k\wedge v_i)) .
\end{eqnarray*}

Now assume that $\kappa^L$ is a $G$-invariant cocycle (in $C^2$) 
representing an element of $\HH^{\bu}(A,A\rtimes G)^G$.
Replacing $\alpha$ and $\beta$ by $\kappa^L$ in the above formula,
we will get the left side of Theorem 2.1(3) for each $g \in G$:
\begin{eqnarray*}
\frac{1}{2}[\kappa^L,\kappa^L](v_i\wedge v_j\wedge v_k) & = & (\kappa^L \circ \kappa^L )(v_i\wedge v_j\wedge v_k)\\
  &= & \psi^*(\kappa^L) (\kappa^L(v_i,v_j)\ot v_k - q_{ik}q_{jk} v_k\ot \kappa^L(v_i,v_j))\\
  & & + q_{ij}q_{ik}\psi^*(\kappa^L)(\kappa^L(v_j,v_k)\ot v_i - q_{ji}q_{ki} v_i\ot \kappa^L(v_j,v_k))\\
  &&+q_{ik}q_{jk}\psi^*(\kappa^L)(\kappa^L(v_k,v_i)\ot v_j - q_{ij}q_{kj} v_j\ot \kappa^L(v_k,v_i))\\
&=& \psi^*(\kappa^L) (\sum_{h\in G} \kappa_h^L(v_i,v_j)\ot {}^hv_k\# h 
    - q_{ik} q_{jk} v_k\ot \sum_{h\in G} \kappa^L_h(v_i,v_j)\# h )\\
   &&+q_{ij}q_{ik}\psi^*(\kappa^L) (\sum_{h\in G} \kappa^L_h(v_j,v_k) \ot {}^hv_i \# h 
   - q_{ji}q_{ki} v_i\ot \sum_{h\in G} \kappa^L_h (v_j,v_k) \# h)\\
 &&+ q_{ik}q_{jk} \psi^*(\kappa^L) (\sum_{h\in G} \kappa^L_h(v_k,v_i) \ot {}^h v_j \#h 
   - q_{ij}q_{kj} v_j \ot \sum_{h\in G}\kappa^L_h(v_k,v_i)\# h)\\
& = & \frac{1}{2}\sum_{h\in G}  \big(  \kappa^L (\kappa^L_h (v_i,v_j), {}^h v_k) 
   - q_{ik}q_{jk} \kappa^L (v_k, \kappa^L_h (v_i,v_j))\\
    &&+ q_{ij}q_{ik} \kappa^L(\kappa^L_h (v_j,v_k) , {}^h v_i) 
     -  \kappa^L(v_i,\kappa^L_h(v_j,v_k))\\
  &&+ q_{ik}q_{jk}\kappa^L(\kappa^L_h (v_k,v_i), {}^h v_j) 
    - q_{ij} q_{ik} \kappa^L (v_j, \kappa^L_h(v_k,v_i))\big) \# h .
\end{eqnarray*}
Indeed, this agrees with half of the left side of Theorem 2.1(3),
after rewriting $\kappa^L$ as a sum, over $g\in G$, of $\kappa_{gh^{-1}}^L$ (for each $h$), 
and then considering separately  each  expression involving a fixed $g = (gh^{-1}) h$.

A similar calculation yields $2 d^{*}\kappa^C$ 
equal to the right side of Theorem 2.1(3).
Hence, Theorem~2.1(3) is equivalent to $[\kappa^L, \kappa^L] = 2d^{*}\kappa^C$.

By again comparing coefficients of fixed $g \in G$, we see that Theorem 2.1(4) is equivalent to $[\kappa^C, \kappa^L] = 0$.
\end{proof}

\end{section}

%%%%%%%%%%%%%%%%%%%%%%%%%%%%%%%%%%%%%%%%%%%%%%%%%%%%%%%%%%%%%%%%
\quad

\quad


\begin{thebibliography}{KKZ}

\bibitem{BB} Y.\ Bazlov and A.\ Berenstein,
  ``Noncommutative Dunkl operators and braided Cherednik algebras,''
Selecta Math.\ 14 (2009), no.\ 3--4, 325--372. 

\bibitem{Be} R.\ Berger,
``The quantum Poincar\'{e}-Birkhoff-Witt Theorem,''
Commun.\ Math.\ Phys.\ 143 (1992), 215--234. 

\bibitem{C} I.\ Cherednik, 
``Double affine Hecke algebras and Macdonald's Conjectures,'' 
Ann.\ of Math.\ (2) 141 (1995), no.\ 1, 191--216.

\bibitem{D} V.\ G.\ Drinfeld, ``Degenerate affine Hecke algebras and
Yangians,'' Funct.\ Anal.\ Appl.\ 20 (1986), 58--60. 

\bibitem{EG} P.\ Etingof and V.\ Ginzburg, 
``Symplectic reflection algebras, Calogero-Moser space, and
deformed Harish-Chandra homomorphism,'' Invent.\ Math.\ 147 (2002), no.\ 2, 243--348. 

\bibitem{Gerstenhaber} M.\ Gerstenhaber,
``The cohomology structure of an associative ring,''
Ann.\ Math.\ 78 (1963), 267--288. 

\bibitem{G} I.\ Gordon,
``Rational Cherednik algebras,'' 
Proceedings of the International Congress of Mathematicians, Volume III,
1209--1225, Hindustan Book Agency, New Delhi, 2010. 

\bibitem{K} V.\ K.\ Kharchenko, ``An algebra of skew primitive elements,''
Algebra and Logic 37 (1998), 101--126. 

\bibitem{LS} V.\ Levandovskyy and A.V.\ Shepler,
``Quantum Drinfeld Hecke algebras,''
Canad.\ J.\ Math.\ 66 (2014), no.\ 4, 874--901. 

\bibitem{L} G.\ Lusztig, ``Affine Hecke algebras and their graded version,''
J.\ Amer.\ Math.\ Soc.\ 2 (1989), no.\ 3, 599-635. 

\bibitem{NSW} D.\ Naidu, P.\ Shroff and S.\ Witherspoon,
``Hochschild cohomology of group extensions of quantum symmetric algebras,''
Proc.\ Amer.\ Math.\ Soc.\ 139 (2011), 1553--1567.

\bibitem{NW} D.\ Naidu and S.\ Witherspoon, 
``Hochschild cohomology and quantum Drinfeld Hecke algebras,''
to appear in Selecta Mathematica.

\bibitem{P} B.\ Pareigis, ``On Lie algebras in braided categories,''
in Quantum Groups and Quantum Spaces, Vol.\ 40, Banach Cent.\ Publ.\ (1997), 139--158. 

\bibitem{PV} T.\ Petit and F.\ Van Oystaeyen,
``On the generalized enveloping algebra of a color Lie algebra,''
Algebr.\ Represent.\ Theor.\ 10 (2007), 367--378. 

\bibitem{RS} A.\ Ram and A.V.\ Shepler, 
``Classification of graded Hecke algebras for complex reflection groups,''
Comment.\ Math.\ Helv.\ 78 (2003), 308--334. 

\bibitem{Sch} M.\ Scheunert, ``Generalized Lie algebras,''
J.\ Math.\ Phys.\ 20 (1979), no.\ 4, 712--720. 

%\bibitem{SZ} M.\ Scheunert and R. B.\ Zhang,
%``Cohomology of Lie superalgebras and their generalizations,''
%J. \ Math.\ Phys. 39, 5024--5061 (1998).

\bibitem{SW} A.V.\ Shepler and S.\ Witherspoon, 
``Drinfeld orbifold algebras,'' 
Pacific J.\ Math.\ (2012) 259-1:161--193.

\bibitem{SW1} A.V.\ Shepler and S.\ Witherspoon, 
``A Poincare-Birkhoff-Witt Theorem for quadratic algebras with group actions,''
Trans.\ Amer.\ Math.\ Soc.\ 366 (2014), no.\ 12, 6483--6506. 

\bibitem{SW2} A.V.\ Shepler and S.\ Witherspoon,
``PBW deformations of skew group algebras in positive characteristic,''
to appear in Algebras and Representation Theory.

\bibitem{Sridharan} R.\ Sridharan, 
``Filtered algebras and representations of Lie algebras,''
Trans.\ Amer.\ Math.\ Soc.\ 100 (1961), 530--550. 

\bibitem{S} P.\ Shroff,
``Quantum Drinfeld orbifold algebras,''
Comm.\ Algebra 43 (2015), no.\ 4, 1563--1570. 

\bibitem{Stefan} D.\ \c{S}tefan, ``Hochschild cohomology on Hopf
Galois extensions,'' J.\ Pure Appl.\ Algebra 103 (1995), 221--233. 

\bibitem{W} M.\ Wambst, 
``Complexes de Koszul quantiques,''
Ann.\ Fourier 43 (1993), no.\ 4, 1089--1156.

\bibitem{Wo} S.\ L.\ Woronowicz, ``Differential calculus on compact
matrix pseudogroups (quantum groups),'' Comm.\ Math.\ Phys.\ 122 (1989), no.\ 1,
125--170. 

\end{thebibliography}
\end{document}